\newcommand{\beq}{\begin{equation}}
\newcommand{\eeq}{\end{equation}}
\newcommand{\beqs}{\begin{equation*}}
\newcommand{\eeqs}{\end{equation*}}
\newcommand{\beqq}{\begin{equation}}
\newcommand{\eeqq}{\end{equation}}
\newcommand{\beqas}{\begin{eqnarray*}}
\newcommand{\eeqas}{\end{eqnarray*}}
\newcommand{\beqa}{\begin{eqnarray}}
\newcommand{\eeqa}{\end{eqnarray}}
\newcommand{\bmuls}{\begin{multline*}}
\newcommand{\emuls}{\end{multline*}}
\newcommand{\bmul}{\begin{multline}}
\newcommand{\emul}{\end{multline}}
\def\bR{\mathbb{R}}
\def\bZ{\mathbb{Z}}
\def\C{\mathbb{C}}
\def\cG{\mathcal{G}}
\def\cF{\mathcal{F}}
\newtheorem{thm}{Theorem}
\newtheorem*{lemma*}{Lemma}
\newtheorem{lemma}{Lemma}
\newcommand{\LL}{L^2(\bR)}
\title{Phase space distribution of Gabor expansions}
\author{Gerard Ascensi}
\address{Faculty of Mathematics, University of Vienna, Nordbergstrasse 15, 1090 Vienna, Austria}
\email{gerard.ascensi@univie.ac.at}
\author{Yurii Lyubarskii}
\address{Department of Mathematical Sciences, Norwegian University of Science and Technology,
NO-7491 Trondheim, Norway} \email{yura@math.ntnu.no}
\author{Kristian Seip}
\address{Department of Mathematical Sciences, Norwegian University of Science and Technology,
NO-7491 Trondheim, Norway} \email{seip@math.ntnu.no}
\thanks{Lyubarskii and Seip are supported by the Research Council of Norway grant
10323200. Ascensi is supported by Fundaci\'o Ferran Sunyer i
Balaguer. This research is part of the European Science Foundation
Networking Programme ``Harmonic and Complex Analysis and Its
Applications'' (HCAA). It was done during a visit of Ascensi to
the Norwegian University of Science and Technology (NTNU). He
acknowledges the support and hospitality of the Department of
Mathematical Sciences at NTNU}
\begin{document}

\maketitle

\begin{abstract}
We present an example of a complete and minimal Gabor system
consisting of time-frequency shifts of a Gaussian, localized at
the coordinate axes in the time-frequency plane (phase space).
Asymptotically, the number of time-frequency shifts contained in a
disk centered at the origin is only $2/\pi$ times the number of
points from the von Neumann lattice found in the same disk.
Requiring a certain regular distribution in phase space, we show
that our system has minimal density among all complete and minimal
systems of time-frequency shifts of a Gaussian.
\end{abstract}

\section{Introduction}

For a function $g$ in $\LL$ and two real numbers $x$ and $y$, we
define
\beq
\label{shift} \rho_{x,y}g(t)= e^{2i\pi y t}g(t-x).
\eeq
We refer to $\rho_{x,y}g$ as the time-frequency shift of $g$ with
respect to the point $(x,y)$ in phase space. Given $g$ in $\LL$ and
a sequence $\Lambda$ of distinct points in $\bR^2$, we are
interested in the spanning properties of the Gabor system \beq
\label{gabor}
\cG_{g,\Lambda}=\{\rho_{x,y}g:\ (x,y)\in \Lambda\}.
\eeq It is well known (see \cite{Jan94}, \cite{RS95},
\cite{CDH99}) that for $\cG_{g,\Lambda}$ to be a frame for $\LL$,
the lower Beurling--Landau density of $\Lambda$ has to be at least
$1$. This means that the minimal number of points from $\Lambda$
to be found in a disk of radius $r$ must be at least $\pi
r^2+o(r^2)$ when $r\to\infty$. The canonical case is when
$\Lambda$ is a lattice $a\bZ\times b\bZ$, the lower
Beurling--Landau density of which is $1/(ab)$ for positive lattice
constants $a$ and $b$. When $g$ is a Gaussian, the lattice
structure of $\Lambda$ is inessential and we have the following
complete description: Assuming $\Lambda$ is a separated sequence,
$\cG_{g,\Lambda}$ is a frame for $\LL$ if and only if the
Beurling-Landau density of $\Lambda$ exceeds 1 \cite{Lyu, Se1,
Se2}.

It is easy to construct examples of complete Gabor systems
$\cG_{g,\Lambda}$ for which the Beurling--Landau density of
$\Lambda$ is $0$. However, when $\Lambda=a\bZ\times b\bZ$, the
density condition $ab\leq 1$ is still necessary for
$\cG_{g,\Lambda}$ to be complete in $\LL$ (see \cite{Rie81}), and it
seems to be commonly thought that $\Lambda$ should somehow be
uniformly localized throughout the entire phase space $\bR^2$ for
there to be nice expansions associated with $\cG_{g,\Lambda}$. The
main point of the present note is to show that the phase space
distribution of $\Lambda$ may be dramatically different from that of
a lattice if we require $\cG_{g,\Lambda}$ to be a complete and
minimal system in $\LL$. Indeed, we have the following example when
$g$ is a Gaussian:

\begin{thm}\label{theorem1}
Set $g(t)=\exp(-\pi t^2)$ and
\[
\Lambda=\{(-1,0),(1,0)\} \bigcup \left\{(0,\pm \sqrt{2n}):\ n=1,2,3,...\right\}
\bigcup \left\{(\pm \sqrt{2n},0):\ n=1,2,3,...\right\}.
\]
Then
$\cG_{g,\Lambda}$ is a complete and minimal system in $\LL$.
\end{thm}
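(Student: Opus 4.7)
The plan is to translate the problem into Fock space via the Bargmann transform. Identifying $(x,y)\in\bR^2$ with $w=x+iy\in\C$ and writing $\Lambda_\C$ for $\Lambda$ viewed as a subset of $\C$, the Bargmann transform is a unitary isomorphism $\mathcal B:\LL\to\cF$, where $\cF=\{F\text{ entire}:\int_\C|F(z)|^2 e^{-\pi|z|^2}\,dA(z)<\infty\}$, sending each time-frequency shift $\rho_{x,y}g$ to a nonzero multiple of the reproducing kernel $K_w(z)=e^{\pi\bar wz}$. Thus $\cG_{g,\Lambda}$ is complete in $\LL$ iff $\Lambda_\C$ is a uniqueness set for $\cF$, and minimal iff for each $\lambda\in\Lambda_\C$ some nonzero $F_\lambda\in\cF$ vanishes on $\Lambda_\C\setminus\{\lambda\}$. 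The central object I would use is the entire function
\[
E(z)=\frac{2(z^2-1)}{\pi z^2}\sin\!\left(\frac{\pi z^2}{2}\right),
\]
which, by the product representation of $\sin$, has simple zeros at precisely the points of $\Lambda_\C$. The identity $|\sin(\pi z^2/2)|^2=\sin^2(\pi(x^2-y^2)/2)+\sinh^2(\pi xy)$ yields the sharp estimate $|E(z)|^2 e^{-\pi|z|^2}\leq C\,e^{-\pi(|x|-|y|)^2}$ uniformly on $\C$, and the matching asymptotic $|E(z)|\sim\pi^{-1}e^{\pi|z|^2/2}$ on the diagonals $|x|=|y|$, while $E$ remains bounded on both coordinate axes.

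For minimality, I would take $F_\lambda(z)=E(z)/[E'(\lambda)(z-\lambda)]$: this is entire (since $\lambda$ is a simple zero of $E$), vanishes on $\Lambda_\C\setminus\{\lambda\}$, and satisfies $F_\lambda(\lambda)=1$. The upper bound above yields $|F_\lambda(z)|^2 e^{-\pi|z|^2}\lesssim|z|^{-2}e^{-\pi(|x|-|y|)^2}$ for $|z|$ large. In polar coordinates $z=\rho e^{i\theta}$, the angular integral $\int_0^{2\pi}e^{-\pi(|x|-|y|)^2}\,d\theta$ is $O(1/\rho)$ because the weight concentrates in four bumps of width $O(1/\rho)$ about the diagonal rays, so the full integral reduces to $\int^\infty d\rho/\rho^2<\infty$. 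Hence $F_\lambda\in\cF$, and via the Bargmann correspondence this produces the desired biorthogonal element.

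For completeness, suppose $F\in\cF$ vanishes on $\Lambda_\C$ and set $G:=F/E$, an entire function. The Fock pointwise bound $|F(z)|\leq\|F\|_\cF e^{\pi|z|^2/2}$ combined with the diagonal asymptotic for $|E|$ gives $|G(z)|\leq\pi\|F\|_\cF$ on the diagonals $|x|=|y|$; moreover $G$ has order at most $2$. Writing the Fock condition $\int|F|^2 e^{-\pi|z|^2}\,dA<\infty$ as
\[
\int_\C |G(z)|^2\,|E(z)|^2 e^{-\pi|z|^2}\,dA(z)<\infty,
\]
and using $|E|^2 e^{-\pi|z|^2}\asymp e^{-\pi(|x|-|y|)^2}$ away from the origin, one obtains a weighted $L^2$-integrability whose weight is concentrated in unit-width strips about the diagonals; this forces $G$ to decay in $L^2$ along each diagonal. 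A sectorial Phragmén--Lindelöf argument in the four sectors of opening $\pi/2$ bounded by the diagonals, combined with this weighted $L^2$-decay along the diagonals themselves, then forces $G\equiv 0$, and hence $F\equiv 0$.

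The main obstacle is this last step. The sectors between consecutive diagonals have opening $\pi/2$, which is exactly the critical angle for entire functions of order $2$: functions such as $e^{\pm i\pi z^2/2}$ are bounded on the diagonals but grow like $e^{\pi|xy|}$ in the sectorial interiors, so Phragmén--Lindelöf alone cannot exclude them. Ruling them out requires genuine use of the Gaussian-weighted $L^2$-integrability derived from $F\in\cF$, whose factor $e^{-\pi(|x|-|y|)^2}$ penalises any non-decay along the diagonals. No purely density-based argument is available here, since the lower Beurling--Landau density of $\Lambda$ is $0$ (disks centered far from the axes contain no points of $\Lambda$); the proof must exploit the specific structural fact that $\Lambda$ lies on the two coordinate axes at the spacings $|w|^2\in\{1\}\cup 2\bZ_{>0}$, encoded in the particular form of $E$.
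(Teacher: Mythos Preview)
Your setup via the Bargmann transform and the function $E$ (the paper calls it $s$) is correct, and the minimality argument is essentially the same as the paper's: the estimate $|E(z)|^2e^{-\pi|z|^2}\lesssim e^{-\pi(|x|-|y|)^2}$ together with the extra factor $|z-\lambda|^{-2}$ does give $F_\lambda\in\cF$.

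The completeness argument, however, has a genuine gap, and you have in fact identified it yourself. You reduce to an entire $G=F/E$ of order $\le 2$ that is bounded on the diagonals, and then invoke ``a sectorial Phragm\'en--Lindel\"of argument \ldots\ combined with this weighted $L^2$-decay'' to conclude $G\equiv 0$. But as you note in your final paragraph, the four sectors have opening $\pi/2$, which is exactly critical for order $2$: functions such as $G(z)=e^{\pm i\pi z^2/2}$ are bounded on the diagonals and still of order $2$, so Phragm\'en--Lindel\"of on the sector yields nothing. You assert that the weighted $L^2$ condition rules these out, but no mechanism is given; the weight $e^{-\pi(|x|-|y|)^2}$ is concentrated \emph{on} the diagonals, where such a $G$ is merely bounded, so the integral $\int|G|^2|E|^2e^{-\pi|z|^2}\,dA$ can be finite without $G$ decaying. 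In short, the last step is precisely the heart of the matter and is left undone.

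The paper resolves this with two ingredients you are missing. First, it uses the sharper pointwise bound $|F(z)|=o(e^{\pi|z|^2/2})$ for $F\in\cF$ (not just the reproducing-kernel $O$-bound), so that $G=\Psi$ is not merely bounded but $o(1)$ along the diagonals. Second, and more importantly, it exploits the evenness of $\Lambda$: one may assume $\Phi$ even, hence $\Psi=\Phi/s$ even, and then the substitution $\Omega(z)=\Psi(e^{i\pi/4}z^{1/2})$ is a well-defined entire function of \emph{exponential type}. Now the critical-angle obstruction disappears into classical indicator-diagram theory: $\Omega(x)=o(1)$ on the real axis forces the indicator diagram of $\Omega$ to be a vertical segment $[-i\tau_-,i\tau_+]$; if $\tau_\pm=0$ then $\Omega$ is bounded, hence constant, hence $0$; if some $\tau>0$, translating back gives $h_\Phi(\theta)=\tfrac{\pi}{2}|\sin 2\theta|+\tau|\cos 2\theta|>\tfrac{\pi}{2}$ for $\theta$ near $\pi/4$, contradicting $\Phi\in\cF$. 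The square-root trick is exactly what converts the borderline order-$2$ problem into a tractable order-$1$ problem.
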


We see that the sequence $\Lambda$ of this example is localized at
the two coordinate axes in $\bR^2$. In view of the density results
mentioned above, it may seem surprising that there exist arbitrarily
large disks containing no points from $\Lambda$. It is also quite
striking that the number of points $\lambda=(\xi,\eta)$ from
$\Lambda$ satisfying $\xi^2+\eta^2\le r^2$ is $2r^2+O(1)$ when $r\to
\infty$. This is asymptotically only $2/\pi$ times the number of
points from a lattice of Beurling--Landau density 1 to be found in
any disk of radius $r$. It is well-known that when $\Lambda$ is a
lattice of Beurling--Landau density $1$ with one point omitted,
$\cG_{g,\Lambda}$ is also a complete and minimal system, see e.g.
\cite{LSe99}. Thus the phase space 
distribution of complete and minimal Gabor systems may differ in a
rather fundamental way.

Our second theorem shows that we have in fact identified two extreme
configurations among all complete and minimal systems of
time-frequency shifts of a Gaussian, assuming the time-frequency
shifts to be regularly distributed as defined in the theory of
entire functions. To make this statement precise, we switch to
complex notation, i.e., we view our sequence $\Lambda$ as a subset
of the complex plane\footnote{In what follows, we will allow
ourselves to view the sequence $\Lambda$ alternately as a subset of
$\bR^2$ and of $\C$, with the tacit choice of viewpoint depending on
whatever fits the context.} via the map $(\xi,\eta)\mapsto
\xi+i\eta$. Given $r>0$ and $0\le \theta < \vartheta \leq 2\pi$, we
write $n_{\Lambda}(r, \theta, \vartheta)= \# \{\lambda \in \Lambda;
|\lambda|< r, \ \theta < \text{arg}\lambda \leq \vartheta\}$, and
we say that $\Lambda$ has angular density if the limit
\[
\Delta_\Lambda(\theta,\vartheta)= \lim_{r\to \infty}
\frac{n_{\Lambda}(r, \theta, \vartheta)}{\pi r^2}
\]
exists for all $\theta$ and $\vartheta$, except possibly for a
countable set of such pairs of numbers. From this definition we see
that if $\Lambda$ has angular density, then the limit
$\Delta_\Lambda(0,2\pi)$ exists. We set
$\Delta_\Lambda=\Delta_\Lambda(0,2\pi)$ and refer to this number as
the density of $\Lambda$. If $\Lambda$ has angular density and, in
addition, the limit
\[ \lim_{r\to\infty} \sum_{\lambda\in \Lambda,\
|\lambda|<r}\lambda^{-2} \] exists, we say that $\Lambda$ is a
regularly distributed set. We refer to Chapter~II of \cite{Le56} for
more refined versions of these definitions (depending on the order
of the functions in question) and their significance in the theory
of entire functions of completely regular growth.

It is clear that the sequence $\Lambda$ of Theorem~\ref{theorem1} as
well as any lattice $a\bZ\times b\bZ$ is a regularly distributed
set. The following theorem therefore places our examples in context.


\begin{thm}\label{theorem2}
Set $g(t)=\exp(-\pi t^2)$ and let $\Lambda$ be a regularly
distributed set. If $\cG_{g,\Lambda}$ is a complete and minimal
system in $\LL$, then $2/ \pi \leq \Delta_\Lambda \leq 1.$

\end{thm}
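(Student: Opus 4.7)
I would work throughout on the Fock-space side of the Bargmann transform, a unitary isomorphism $\LL\to\cF^2$ with $\cF^2$ the Bargmann--Fock space of entire functions with norm $\|F\|^2=\int_{\C}|F(z)|^2e^{-\pi|z|^2}\,dA(z)$. Under this transform, the time-frequency shifts $\rho_\lambda g$ of the Gaussian correspond (up to unimodular factors) to the reproducing kernels $K_\lambda(z)=e^{\pi\bar\lambda z}$, so $\cG_{g,\Lambda}$ is complete in $\LL$ if and only if $\Lambda$ is a uniqueness set for $\cF^2$, and minimal if and only if, for each $\lambda_0\in\Lambda$, there exists a nonzero $F_{\lambda_0}\in\cF^2$ vanishing on $\Lambda\setminus\{\lambda_0\}$.

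The upper bound $\Delta_\Lambda\le 1$ is the easy half. Fix any $\lambda_0$ and take the associated $F_{\lambda_0}$. The reproducing-kernel inequality gives $|F_{\lambda_0}(z)|\le C\,e^{\pi|z|^2/2}$, so Jensen's formula yields $n_{F_{\lambda_0}}(r)\le r^2+O(\log r)$. Since the zero set of $F_{\lambda_0}$ contains $\Lambda\setminus\{\lambda_0\}$, which has the same density as $\Lambda$, we conclude $\Delta_\Lambda\le 1$.

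For the lower bound $\Delta_\Lambda\ge 2/\pi$, I would argue by contradiction and construct a nonzero $F\in\cF^2$ vanishing on $\Lambda$. The regular-distribution hypothesis makes the Weierstrass canonical product $P(z)=\prod_\lambda E_2(z/\lambda)$ of genus $2$ converge, and by Levin's theory of entire functions of completely regular growth (Chapter~II of \cite{Le56}), $P$ has completely regular growth of order $2$. After multiplying by a factor $e^{az^2}$ that absorbs the conditionally convergent sum $\alpha=\lim_{r\to\infty}\sum_{|\lambda|<r}\lambda^{-2}$, the resulting function $F(z):=P(z)\,e^{az^2}$ has an indicator given by the Lindel\"of-type representation
\[
 h_F(\theta)=\frac{\pi^2}{4}\int_0^{2\pi}|\sin 2(\theta-\phi)|\,d\Delta(\phi),
\]
where $d\Delta$ is the angular density measure of $\Lambda$, of total mass $\Delta_\Lambda$. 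Since $|\sin 2\psi|\le 1$, we obtain the uniform bound $h_F(\theta)\le (\pi^2/4)\,\Delta_\Lambda$. If $\Delta_\Lambda<2/\pi$ then $h_F(\theta)<\pi/2$ for every $\theta$, and the completely regular growth estimate $\log|F(re^{i\theta})|=r^2 h_F(\theta)+o(r^2)$ forces the integrand $|F(z)|^2 e^{-\pi|z|^2}$ to decay exponentially outside the usual exceptional $C_0$-set of zero density. Hence $F\in\cF^2\setminus\{0\}$ vanishes on $\Lambda$, contradicting the uniqueness property. Notice that the sequence of Theorem~\ref{theorem1} sits on the boundary of this argument: there $\Delta_\Lambda=2/\pi$, the indicator of the bare canonical product (without the extra factor $(z^2-1)$ contributed by the points $\pm 1$) meets $\pi/2$ only at isolated directions, and it is precisely that extra quadratic factor that pushes the full product out of $\cF^2$.

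The main obstacle is the derivation of the integral representation for $h_F$ and the passage from this pointwise indicator bound to genuine membership in $\cF^2$. Both steps are standard applications of Levin's framework, but care is needed to handle the conditionally convergent sum defining $\alpha$ and to bound the contribution of the exceptional $C_0$-set, where the completely regular growth asymptotic may fail, to the integral of $|F|^2 e^{-\pi|z|^2}$ over the plane.
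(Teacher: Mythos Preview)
Your upper bound via Jensen's formula is correct and matches the paper's argument. The lower bound, however, has a genuine gap: the ``Lindel\"of-type representation'' you write down for the indicator of the adjusted canonical product is not correct in general. A short distributional check shows that if $h(\theta)=\frac{\pi^2}{4}\int_0^{2\pi}|\sin 2(\theta-\phi)|\,d\Delta(\phi)$, then $h''+4h$ equals $\pi^2$ times the $\pi/2$-periodization of $d\Delta$, whereas Levin's relation between zeros and indicator for a function of completely regular growth of order $2$ requires $h''+4h=4\pi^2\,d\Delta$ (in the paper's normalization $\Delta_\Lambda=\lim n(r)/(\pi r^2)$). Your formula is therefore valid only when the angular measure $d\Delta$ is invariant under rotation by $\pi/2$, which happens to hold for the set in Theorem~\ref{theorem1} but not in general. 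As a concrete obstruction, take $d\Delta=\tfrac{\Delta_\Lambda}{2}(\delta_0+\delta_{\pi/2})$; this satisfies the Lindel\"of constraint $\int e^{-2i\phi}\,d\Delta(\phi)=0$, but solving $h''+4h=4\pi^2\,d\Delta$ directly shows that $\min_{a}\max_\theta h_{Pe^{az^2}}(\theta)=\tfrac{\pi^2}{2}\Delta_\Lambda$, not $\tfrac{\pi^2}{4}\Delta_\Lambda$ (evaluate the general solution at $\theta=\pi/4$ and $\theta=3\pi/4$). For $\Delta_\Lambda\in(1/\pi,\,2/\pi)$ this already exceeds $\pi/2$, so your construction produces no function in $\cF$ vanishing on $\Lambda$, and the key inequality $h_F(\theta)\le\frac{\pi^2}{4}\Delta_\Lambda$ simply fails.

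The paper's route is genuinely different and uses \emph{minimality} in an essential way, not just completeness. From minimality one gets a specific $\Phi\in\cF$ whose zero set is exactly $\Lambda\setminus\{\lambda'\}$; then completeness is used---via multiplication by a dilated Mittag--Leffler function $E_{1/2}$---to show that $h_\Phi$ must attain $\pi/2$ on every closed arc of length $\pi/2$ (otherwise $E_{1/2}(cz)\Phi(z)$ would lie in $\cF$ with strictly more zeros). This, together with $2$-trigonometric convexity of $h_\Phi$, forces $\int_0^{2\pi}h_\Phi\,d\theta\ge 2\pi$, and the identity $\Delta_\Lambda=\pi^{-2}\int_0^{2\pi}h_\Phi\,d\theta$ (Levin, Chapter~IV) gives $\Delta_\Lambda\ge 2/\pi$. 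Your approach, by contrast, attempts to derive the lower bound from completeness alone by exhibiting an $F\in\cF$ with $\Lambda$ inside its zero set; even if such a statement were true, it does not follow from the indicator bound you assert, and you would need a different construction (possibly with extra zeros) together with a separate argument.
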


There are obvious ways in which our two theorems can be generalized:
The example of Theorem~1 can be extended to much more general sets,
and using the notion of proximate order as defined in Chapter 1 of
\cite{Le56}, one may relax the assumption about regular distribution
of $\Lambda$ in Theorem~\ref{theorem2}. We decided not to pursue
such elaborations; our aim is to present the essence of an
interesting qualitative phenomenon, avoiding as far as possible
obscuring technicalities.

The assumption that $\cG_{g,\Lambda}$ be complete and minimal
ensures the existence of a unique biorthogonal system in $\LL$. Thus
we may write down meaningful Fourier expansions for functions in
$L^2(\bR)$ with respect to the system $\cG_{g,\Lambda}$. However,
typically this expansion is not convergent, so that one may have to
find a special procedure to recover a function from its Fourier
coefficients.
In \cite{LSe99}, a simple summation method was shown to do the job
for certain sequences similar to lattices. It would be interesting
to identify those functions for which the Gabor expansion exhibited
in Theorem~\ref{theorem1} converges, and to find an appropriate
summation method for arbitrary functions in $\LL$.

We can find examples of complete sets $\cG_{g,\Lambda}$ with
$\Lambda$ localized at a single line \cite{Zal78, Ole97}. It has
been proved that there are no Schauder bases of Gabor systems with
such phase space localization \cite{Heil}, but we still do not know
if there exists such a system $\cG_{g,\Lambda}$ that is complete and
minimal in $\LL$. In the case of Gaussian functions, the
corresponding sequence $\Lambda$ would have to be a regularly
distributed set, but then its density would be $0$, and this is
incompatible with Theorem~\ref{theorem2}. It has also been proved
that there are no complete and minimal systems of time-frequency
shifts of the Poisson function \cite{BrM07}.

The contrast with what is known about Paley--Wiener spaces and
families of complex exponentials in $L^2$ of an interval is worth
noting. In this case, we have the following precise density
condition: If the family $\{\exp(i\lambda t)\}_{\lambda\in \Lambda}$
is a complete and minimal system in $L^2(-a,a)$, then the number of
points from $\Lambda$ to be found in the disk $|z|\le r$ behaves
asymptotically as $2ar/\pi$ when $r\to\infty$; see Theorem~1 in
Lecture 17 of \cite{Le96}.

In the next brief section, we will make a transition to the Fock
space of entire functions, and then the remaining two sections are
devoted to proving our two theorems.

\section{Transition to the Fock space}

For the rest of the paper, $g$ denotes the normalized Gaussian:
$g(t)=2^{1/4}e^{-\pi t ^2}$. We define the Bargmann transform in
the following way: \beq \label{bt} \mathcal B f(z)=e^{-i\pi
xy}e^{\frac \pi 2 |z|^2}\int_{-\infty}^\infty f(t)
\overline{(\rho_{x,-y}g)(t)}dt=
2^{\frac 14 }\int_{-\infty}^\infty f(t) e^{-\pi t^2}e^{2\pi t z}
e^{-\frac \pi 2 z^2}dt,
\eeq
where we have put $z=x+iy$. The Bargmann transform maps $\LL$
isometrically onto the Fock space
\begin{equation*}
\mathcal{F}=\left\{ F : F \text{ is entire and }
\|F\|_{\mathcal{F}}^2=\int_{\C}|F(z)|^2e^{-\pi|z|^2}\,dm(z)<\infty\right\};
\end{equation*}
here $dm$ denotes planar Lebesgue measure. See \cite{Charly,
Folland} for this fact and other basic properties of the Bargmann
transform. Employing methods from the theory of entire functions,
we may prove much stronger results for a Gaussian than what can be
obtained for arbitrary functions in $\LL$.

We recall that a sequence of distinct points $\Lambda$ in $\C$ is a
set of uniqueness for $\mathcal{F}$ if the only function in
$\mathcal{F}$ vanishing at every point from $\Lambda$ is the zero
function. A set of uniqueness $\Lambda $ has zero excess if it fails
to be a set of uniqueness for $\mathcal{F}$ on the removal of any
one of the points from $\Lambda$. Note that since a function $F$ in
$\mathcal{F}$ remains in $\mathcal{F}$ when we change the position
of a single zero of $F$, it does not matter which particular point
we remove from $\Lambda$. A standard duality argument shows that
\eqref{bt} yields the following translation of our problem.

\begin{lemma}
The system $\cG_{g,\Lambda}$
is complete and minimal 
in $\LL$ if and only if
$\Lambda$ is a set of uniqueness of zero excess
for $\mathcal{F}$.
\end{lemma}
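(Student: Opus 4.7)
The plan is to use the Bargmann transform $\mathcal{B}:\LL\to\mathcal{F}$ to convert both conditions—completeness and minimality of $\cG_{g,\Lambda}$—into vanishing conditions for entire functions in $\mathcal{F}$. The key observation is that formula \eqref{bt} can be rewritten as
\[
\langle f,\rho_{x,y}g\rangle_{L^2(\bR)} = c(x,y)\, \mathcal{B}f(z_{x,y}),
\]
where $c(x,y)$ is nowhere zero (a unimodular phase times $e^{-\pi|z|^2/2}$) and $z_{x,y}\in\C$ is the point associated with $(x,y)$ under the identification fixed in the paper. Consequently, $f\perp\rho_{x,y}g$ if and only if the entire function $\mathcal{B}f$ vanishes at $z_{x,y}$. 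Since $\mathcal{B}$ is an isometric isomorphism of $\LL$ onto $\mathcal{F}$, every $F\in\mathcal{F}$ arises as $\mathcal{B}f$ for a unique $f\in\LL$.

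With this identity in hand, I would translate completeness first. The system $\cG_{g,\Lambda}$ is complete in $\LL$ exactly when the only $f\in\LL$ orthogonal to every $\rho_{x,y}g$ with $(x,y)\in\Lambda$ is $f=0$; by the reproducing identity above, this is precisely the statement that the only $F\in\mathcal{F}$ vanishing on $\Lambda$ is $F\equiv 0$, i.e., that $\Lambda$ is a set of uniqueness for $\mathcal{F}$.

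Next I would translate minimality. By a standard Hahn--Banach/projection argument, a family $\{v_\lambda\}$ in a Hilbert space is minimal if and only if it admits a biorthogonal system, equivalently, if and only if for each index $\lambda_0$ there exists an $f$ with $\langle f,v_\lambda\rangle=0$ for all $\lambda\neq\lambda_0$ and $\langle f,v_{\lambda_0}\rangle\neq 0$. Applying this to $v_\lambda = \rho_{x,y}g$ and transporting via $\mathcal{B}$ yields: for each $\lambda_0\in\Lambda$ there exists $F\in\mathcal{F}$ that vanishes on $\Lambda\setminus\{\lambda_0\}$ but not at $\lambda_0$; in other words, $\Lambda\setminus\{\lambda_0\}$ fails to be a set of uniqueness for every $\lambda_0\in\Lambda$. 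Combining the two translations gives exactly the definition of a set of uniqueness with zero excess.

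The whole argument is essentially bookkeeping once the orthogonality/evaluation identity is in place. The only nontrivial ingredient is the standard duality fact that minimality of a countable family in a Hilbert space is equivalent to the existence of a biorthogonal system, which I would simply cite. A minor technical nuisance is making the sign/conjugation convention in the identification $(x,y)\mapsto z_{x,y}$ consistent with the particular form of \eqref{bt}, but this has no bearing on the final statement, since both conditions depend only on $\Lambda$ as a point set in $\C$ and are invariant under complex conjugation of that set.
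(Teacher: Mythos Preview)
Your proposal is correct and follows essentially the same route as the paper: identify $\langle f,\rho_{x,y}g\rangle$ with a nonvanishing factor times an evaluation of $\mathcal{B}f$, then invoke duality (completeness $\leftrightarrow$ uniqueness, minimality $\leftrightarrow$ existence of a biorthogonal family $\leftrightarrow$ failure of uniqueness after removing any single point). The only cosmetic difference is how the conjugation issue is handled: the paper passes to $\cG_{g,\overline{\Lambda}}$ on the $L^2$ side (using that $g$ is real), whereas you note that the uniqueness-with-zero-excess condition on the Fock side is invariant under $\Lambda\mapsto\overline{\Lambda}$; these are two sides of the same observation.
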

\begin{proof}
We set $\overline{\Lambda}=\{ (\xi_j,-\eta_j)\}$ and note that, by
symmetry, $\cG_{g,\Lambda}$ is a complete and minimal system in
$\LL$ if and only if $\cG_{g,\overline{\Lambda}}$ is a complete
and minimal system in $\LL$.
%
Since every function in $\mathcal{F}$ can be expressed as the
Bargmann transform of some function in $\LL$
and the factor $e^{-\pi ixy}e^{\frac{\pi}{2}|z|^2}\neq 0$ at every
point $z$, the result follows by duality.
\end{proof}

\section{Proof of Theorem~\ref{theorem1}}
This proof will use some standard notions and results from the
theory of entire functions of exponential type, such as indicator
diagrams, indicator functions, and the Phragm\'{e}n--Lindel\"{o}f
principle. Good references for this material are Lectures 6 and 9 in
\cite{Le96}, Chapter~1 of \cite{Le56}, and Chapter~5 of \cite{Bo54}.

In this proof, the meaning of the notation $A(w)\asymp B(w)$ for $w$
in some set $W$ is that the ratio between the two positive functions
$A(w)$ and $B(w)$ is bounded from below and above by positive
constants independent of $w$ in $W$.

To prove that $\Lambda$ is a set of uniqueness for $\mathcal{F}$,
we will argue by contradiction. Thus we begin by assuming that
there does indeed exist a nontrivial function $\Phi$ in $\cF$
vanishing at $\Lambda$. We may assume that $\Phi$ is an even
function. Were it not, we could replace it by $\Phi(z)+\Phi(-z)$,
which is again a function in $\cF$ vanishing on $\Lambda$;
admittedly, this function is identically $0$ if $\Phi$ is an odd
function, but were this the case, we could replace $\Phi$ by
$\Phi(z)/z$.

We note that $\Lambda$ is the zero set of the function
\[s(z)= (z^2-1) z^{-2} \sin \frac {\pi z^2}{2}.\] It follows that
the function $\Psi(z)=\Phi(z)s(z)^{-1}$
is also an even entire function of order at most 2.


We will use the following elementary estimate for the sine
function:
\[
|\sin \pi z | \asymp \exp (\pi |\Im z|), \ \ \ \text{dist}(z, \bZ)
>\varepsilon,
\]
where $\varepsilon$ is an arbitrary positive number. It follows from
this estimate that \beq \label{gen_estimate} |s(re^{i\theta})|\asymp
e^{\frac{\pi}{2} r^2 |\sin 2\theta|}, \ \ \
\text{dist}(re^{i\theta}, \Lambda) > \epsilon r^{-\frac{1}{2}}.
\eeq
In particular, 
\[ |s(re^{i(\frac{\pi}{4} + k \frac{\pi}{2})})| \asymp
e^{\frac{\pi}{2} r^2}, \ \ \ k=0,1,2,3 \ \ \text{and} \ \ r>0. 
\]
Combining the latter estimate with the
fact that every function $F$ in $\cF$ satisfies \beq
\label{inequality} |F(z)|= o(e^{\frac{\pi}{2}|z|^2}), \ \ z \to
\infty \eeq (see Lemma 3 in \cite{LSe99} for a stronger statement),
we obtain
\begin{equation}
\label{cross_decay} |\Psi(re^{i(\frac{\pi}{4} + k \frac \pi 2)}) |
= o(1) \ \ \text{as}\ \ r\to \infty, \ k=0,1,2,3.
\end{equation}

Since $\Psi$ is an even function, 
$\Omega(z) = \Psi(e^{i\frac{\pi}{4}}z^{\frac{1}{2}})$
is an entire function of exponential type. We observe that
\eqref{cross_decay} yields \beq \label{line_decay}
|\Omega(x)|=o(1)
\eeq
when $x \to \pm \infty$. Hence
the indicator diagram of $\Omega$ is the segment
$[-i\tau_-,i\tau_+]$ on the imaginary axis, and its indicator
function is \beq \label{om_indicator} h_\Omega(\theta) =
\limsup_{r\to \infty} r^{-1} \log |\Omega(re^{i\theta})|=
\begin{cases} \tau_+ \sin \theta & \text{if} \ \ \sin \theta >0, \\
-\tau_-\sin \theta & \text{if} \ \ \sin \theta <0.
\end{cases}
\eeq If $\max\{\tau_+,\tau_-\}=0$, then the
Phragm\'{e}n--Lindel\"{o}f principle and \eqref{line_decay} show
that $\Omega$ is a bounded entire function and therefore a constant,
by Liouville's theorem. Thus $\Omega \equiv 0$ and, consequently,
$\Phi \equiv 0$, which is the desired contradiction.

It remains to consider the possibility of $\max\{\tau_+,\tau_-\}>0$.
By symmetry, we may confine ourselves to the case $\tau_+>0$.
We assume that $\sin \theta >0$ and rewrite
\eqref{om_indicator} as 
\[
h_\Psi (\theta)= \limsup_{r\to \infty} r^{-2} \log |\Psi(re^{i\theta})|=
\tau_+ |\sin (2\theta + \pi/2 )|= \tau_+ |\cos (2\theta)|.
\]
Combining this with \eqref{gen_estimate} and using that
$\Psi(z)=\Phi(z)s(z)^{-1}$, we get
\beq
\label{onehand} \limsup_{r\to\infty} r^{-2} \log |\Phi(re^{i\theta})|=
\frac \pi 2 |\sin 2 \theta|+ \tau_+ |\cos (2\theta)|.
\eeq
On the other hand,
\eqref{inequality} with $F=\Phi$ yields
\beq
\label{otherhand}
\limsup_{r\to \infty} r^{-2} \log |\Phi(re^{i\theta})|\leq \frac \pi 2.
\eeq
The two relations \eqref{onehand} and
\eqref{otherhand} are incompatible because an elementary calculus
argument shows that, for each $\tau_+>0$, there exists $\delta >0$
such that
\[
\frac \pi 2 |\sin 2 \theta|+ \tau |\cos (2\theta)| > \frac \pi 2 \]
for $0<|\theta- \pi/4 | < \delta$. Thus we cannot have
$\max\{\tau_+,\tau_-\}>0$ and conclude that $\Lambda$ is a set of
uniqueness for $\cF$.

Direct estimates based on inequality \eqref{gen_estimate} show that
$(z-1)^{-1}s(z)$ is in $\cF$. In other words, $\Lambda\setminus
\{(1,0)\}$ is not a set of uniqueness for $\cF$, and consequently
$\Lambda$ has zero excess.


\section{Proof of Theorem \ref{theorem2}}

We will now assume that $\Lambda$ is a set of uniqueness of zero
excess for $\cF$. We fix a point $\lambda'$ in $\Lambda$ and let
$\Phi$ be a function $\cF$ with only simple zeros and
whose zero set coincides with $\Lambda \setminus \{\lambda'\}$. The densities of $\Lambda$
and $\Lambda \setminus \{\lambda'\}$ are of course the same.

We begin by noting that the right inequality $\Delta_\Lambda\le 1$
is a simple consequence of Jensen's formula; in particular, we only
need the required regular distribution of $\Lambda$ to define the
density $\Delta_\Lambda$. Set $n_\Lambda(r)=n_\Lambda(r,0,2\pi)$ and
assume that $\Phi(0)\neq 0$. Then Jensen's formula gives
\[
\int_0^r\frac{n_\Lambda(t)}{t}\,dt=\frac{1}{2\pi}\int_0^{2\pi}\log
|\Phi(re^{i\theta})|\,d\theta-\log|\Phi(0)|.
\]
Using again \eqref{inequality}, we obtain
\[ \liminf_{r\to\infty} \frac{n_\Lambda(r)}{\pi r^2}\le 1, \]
where the left-hand side equals the density $\Delta_\Lambda$
whenever the latter quantity exists.

The left inequality $2/\pi \le \Delta_\Lambda$ is an immediate
consequence of the following two lemmas on the indicator function
\[
h_\Phi(\theta)= \lim\sup_{r\to \infty} r^{-2} \log |\Phi(re^{i\theta})| , \ \ \ \theta \in [0, 2\pi].
\]
\begin{lemma}
\label{indicator1} Let $\Phi$ be a nontrivial entire function whose
zero set $\Lambda$ is a regularly distributed set. Then
\[ \Delta_\Lambda=\frac{1}{\pi^2}\int_0^{2\pi} h_\Phi(\theta)
d\theta. \]
\end{lemma}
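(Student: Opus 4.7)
The strategy is to combine Jensen's formula (which relates the density of zeros to the average of $\log|\Phi|$ over circles) with Levin's theory of entire functions of completely regular growth (which relates this average to the integral of the indicator $h_\Phi$). I may assume $\Phi(0)\neq 0$ by extracting a power of $z$ if necessary, which affects neither $\Delta_\Lambda$ nor $h_\Phi$. Jensen's formula then gives
\[
\int_0^r \frac{n_\Lambda(t)}{t}\,dt = \frac{1}{2\pi}\int_0^{2\pi} \log|\Phi(re^{i\theta})|\,d\theta - \log|\Phi(0)|,
\]
and the assumption that $\Lambda$ has angular density gives $n_\Lambda(r)=\pi\Delta_\Lambda r^2 + o(r^2)$. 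Hence the left-hand side equals $(\pi\Delta_\Lambda/2)\,r^2 + o(r^2)$, and the lemma reduces to showing that the right-hand side equals $(r^2/2\pi)\int_0^{2\pi} h_\Phi(\theta)\,d\theta + o(r^2)$.

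For this asymptotic, I would invoke Levin's theorem from Chapter II of \cite{Le56}: the two conditions defining a regularly distributed set of order two (existence of angular density and of $\lim_{r\to\infty}\sum_{|\lambda|<r}\lambda^{-2}$) are precisely what is needed to guarantee that the canonical product $\Pi$ with zero set $\Lambda$ has completely regular growth of order two. Since $\Phi$ has order at most two (as $\Phi\in\mathcal{F}$) and shares its zero set with $\Pi$, the Hadamard factorization gives $\Phi(z)=e^{P(z)}\Pi(z)$ for a polynomial $P$ of degree at most two, and multiplication by $e^P$ preserves complete regular growth. Thus $r^{-2}\log|\Phi(re^{i\theta})|\to h_\Phi(\theta)$ in $L^1(d\theta)$ as $r\to\infty$, and substituting this into Jensen's formula yields the desired identity.

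The main obstacle is the appeal to completely regular growth. One inequality is essentially free: the bound $|\Phi(z)|=o(e^{\pi|z|^2/2})$ arising from $\Phi\in\mathcal{F}$ combines with a reverse Fatou argument applied to the nonnegative family $\pi/2 - r^{-2}\log|\Phi(re^{i\theta})|+o(1)$ to give
\[
\pi^2 \Delta_\Lambda \leq \int_0^{2\pi} h_\Phi(\theta)\,d\theta.
\]
The reverse inequality is the substantive content of the lemma; it asserts that $\log|\Phi|$ cannot dip significantly below its radial limsup on a set of positive angular measure, which is exactly the defining property of completely regular growth and has no obvious shortcut outside of Levin's machinery.
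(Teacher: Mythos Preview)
Your proposal is correct and follows the same route as the paper, which simply records that the lemma is a special case of Theorem~3 in Chapter~IV of Levin~\cite{Le56}; you have effectively unpacked the mechanism behind that citation (Jensen's formula plus the completely regular growth of the canonical product with a regularly distributed zero set). One minor point: the lemma as stated does not assume $\Phi\in\mathcal{F}$, which you use to bound the order by two, but some order hypothesis is implicit for $h_\Phi$ to be finite and is present in the paper's application, so this is not a genuine gap.
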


\begin{proof} The lemma is a special case of Theorem 3 in Chapter IV
of \cite{Le56}. We note in passing that this is the point where the
regular distribution of $\Lambda$ is crucial.
\end{proof}

\begin{lemma}
\label{indicator2} Let $\Phi$ be a function in $\cF$ such that
whenever $G$ is a nontrivial entire function with nonempty zero set,
the function $G\Phi$ does not belong to $\cF$. Then
\[ \int_{0}^{2\pi} h_\Phi (\theta) d\theta \ge 2\pi .\]
\end{lemma}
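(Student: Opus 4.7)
I prove the contrapositive. Suppose $\int_0^{2\pi} h_\Phi(\theta)\,d\theta < 2\pi$; I then construct a nontrivial entire $G$ with a nonempty zero set for which $G\Phi \in \cF$, contradicting the hypothesis. Since $\Phi \in \cF$ gives $h_\Phi(\theta) \le \pi/2$ by \eqref{inequality}, the ``gap'' $\varphi(\theta) := \pi/2 - h_\Phi(\theta)$ is nonnegative, and the assumption becomes $\int_0^{2\pi}\varphi(\theta)\,d\theta > \pi^2 - 2\pi = \pi(\pi-2) > 0$.

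\emph{Easy case.} If $\tau:=\sup_\theta h_\Phi(\theta) < \pi/2$, take $G(z)=z$, which vanishes at the origin. Since $\Phi$ is entire of order at most $2$ with type $\tau$, we have $|\Phi(z)| \le C_\epsilon e^{(\tau+\epsilon)|z|^2}$ uniformly for every $\epsilon>0$ and large $|z|$; choosing $\epsilon<(\pi/2-\tau)/2$ yields $|G\Phi(z)|^2 e^{-\pi|z|^2} \le C|z|^2 e^{-\delta|z|^2}$ for some $\delta>0$, which is integrable on $\C$, so $G\Phi \in \cF$.

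\emph{Main case.} Here $\sup_\theta h_\Phi = \pi/2$. I attempt $G(z) = (z-z_0)\,e^{qz^2}\,\Pi(z)$ with $z_0 \in \C$, $q \in \C$, and $\Pi$ a canonical product of order $\le 2$. This $G$ has a zero at $z_0$ and order-$2$ indicator $h_G(\theta) = h_\Pi(\theta) + \text{Re}(qe^{2i\theta})$, where $h_\Pi$ can be any prescribed $2$-trigonometrically convex function, realized via Hadamard--Lindel\"{o}f factorization. The target is to choose $q$ and $h_\Pi$ so that $h_\Pi(\theta) + \text{Re}(qe^{2i\theta}) \le \varphi(\theta)-\delta$ uniformly for some $\delta>0$; once this holds, the integrability estimate of the easy case, applied to $G\Phi$ in place of $\Phi$, yields $G\Phi \in \cF$. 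The main obstacle is securing strict inequality at the set $E := \{\theta : \varphi(\theta)=0\}$: a Sion-minimax / Fourier-duality computation shows that the sinusoidal choice $h_\Pi \equiv 0$ suffices unless $E$ supports a probability measure $\mu$ with $\hat\mu(\pm 2) = 0$---this obstruction is realized exactly by the extremal configuration $E=\{\pi/4+k\pi/2 : k=0,1,2,3\}$ of Theorem~\ref{theorem1}, where $\int h_\Phi = 2\pi$ is attained. The strict inequality $\int h_\Phi < 2\pi$ provides the slack to construct a nontrivial $h_\Pi$ with higher-mode Fourier content that supplies the missing margin at $E$; the Fredholm-alternative constraint $\int \sin(2(\theta-\alpha))\,d\nu(\theta) \ge 2\pi$ on each arc of length $\pi/2$ bounded by consecutive maxima of $h_\Phi$ (with $\nu = (h_\Phi''+4h_\Phi)/(4\pi)$) cannot be saturated simultaneously under the strict hypothesis, so the requisite $h_\Pi$ exists and the construction closes.
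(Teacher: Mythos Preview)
Your easy case ($\sup_\theta h_\Phi < \pi/2$) is correct, modulo the standard fact that the type of an order-$2$ entire function equals $\sup_\theta h_\Phi(\theta)$, which is what lets you pass from the indicator bound to the uniform estimate $|\Phi(z)|\le C_\epsilon e^{(\tau+\epsilon)|z|^2}$.

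The main case, however, is not a proof. You assert that ``a Sion-minimax / Fourier-duality computation'' disposes of the case where $E$ carries no probability measure with $\hat\mu(\pm 2)=0$, and that a ``Fredholm-alternative constraint'' then manufactures the needed $h_\Pi$ in the remaining case; neither computation is carried out, and it is not at all clear that they can be. The entire content of the lemma in this regime is precisely the existence of a $2$-trigonometrically convex $h_G$ with $\sup_\theta h_G\ge 0$ and $h_G(\theta)+h_\Phi(\theta)<\pi/2-\delta$ for all $\theta$, under only the hypothesis $\int h_\Phi<2\pi$. You have not established this. In particular, $E=\{h_\Phi=\pi/2\}$ may a priori be an interval rather than a finite set, and your obstruction analysis (phrased in terms of discrete measures supported on four points) does not cover that. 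The closing sentence, which claims that certain arc constraints ``cannot be saturated simultaneously under the strict hypothesis, so the requisite $h_\Pi$ exists,'' is a restatement of what needs to be proved, not an argument.

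The paper avoids this difficulty by decoupling the analytic construction from the geometric inequality. First it shows, with an \emph{explicit} multiplier (the Mittag-Leffler function $E_{1/2}(z(\epsilon/2)^{1/2})$, which has infinitely many zeros and satisfies sharp growth estimates in each half-plane $|\arg z|\lessgtr\pi/4$), that under the hypothesis of the lemma every closed arc of length $\pi/2$ must contain a point where $h_\Phi=\pi/2$. This is the only place a function $G$ is produced, and it is produced by formula. Second, it derives $\int_0^{2\pi} h_\Phi\ge 2\pi$ from this structural fact alone: $2$-trigonometric convexity gives $h_\Phi(\theta)\ge \tfrac{\pi}{2}\cos 2(\theta-\theta_j)$ near each global maximum $\theta_j$, and since consecutive $\theta_j$ are at most $\pi/2$ apart, the resulting piecewise-cosine lower envelope has integral at least that of the symmetric configuration $\{0,\pi/2,\pi,3\pi/2\}$, namely $2\pi$. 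Your contrapositive route entangles these two steps and leaves the hard one (the construction of $G$ under a mere integral deficit) unfinished.
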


\begin{proof}
On the assumption of the lemma, we will first prove the following
basic relation: \beq \label{nodrops}
\sup_{\theta\in[\theta_0-\pi/4,\theta_0+\pi/4)}
h_{\Phi}(\theta)=\frac{\pi}{2} \eeq for every $\theta_0 $ in
$[0,2\pi)$. We argue again by contradiction and assume that
\eqref{nodrops} fails, say for $\theta_0=0$. In other words, we
assume that
\beq
\label{epsilon} \epsilon = \frac \pi 2 - \sup_{|\theta|\leq \pi/4
} h_{\Phi}(\theta) >0. \eeq We will construct a function $\Psi$
in $\cF$ whose zero set is larger than $\Lambda
\setminus\{\lambda'\}$. The existence of $\Psi$ will contradict
the assumption of the lemma.

Consider the Mittag-Leffler function \beq \label{mlf}
E_{\frac{1}{2}}(z)=\sum_{k=0}^\infty \frac {z^k}{\Gamma(1+k/2)}. \eeq We
refer to Section 18.1 of \cite{Erd} for a detailed exposition of the
properties of this function. What we need, is that $E_{\frac{1}{2}}$ is an
entire function of order 2 with infinitely many zeros, and also that
\beq
\label{mlfestimate}
E_{\frac{1}{2}}(z)=\begin{cases} 2e^{z^2}+ O\big (|z|^{-1}\big ), & |\arg z| \leq \frac \pi 4 \\
O\big (|z|^{-1}\big ), & \frac \pi 4 \leq |\arg z| \leq \pi
\end{cases}
\eeq
when $|z|\to \infty$; see relations (7) and (9) in Section 18.1 of
\cite{Erd}.

We now introduce the following function:
\[
\Psi(z) = \Phi(z) E_{\frac{1}{2}}\bigl(z (\epsilon/2)^{1/2}\bigr).
\]
It follows from \eqref{mlfestimate} that
\[
|\Psi(z )|\leq C |\Phi(z )|, \ \ \ \frac \pi 4 \leq |\arg z|
\leq \pi,
\]
for some constant $C$, and since $\Phi \in \cF$, we obtain
\[
\int_{\frac \pi 4 \leq |\arg z| \leq \pi} |\Psi(z)|^2 e^{-\pi
|z|^2} dm(z) < \infty.
\]
On the other hand, using \eqref{mlfestimate} and an estimate based
on the Phragm\'{e}n--Lindel\"{o}f principle (see Theorem~28 in
Chapter I of \cite{Le56}), we get
\[
|\Psi(z)|\leq Ce^{(\frac \pi 2 - \frac \epsilon 3) |z|^2}, \ \ \
|\arg z| \leq \frac \pi 4,
\]
with $C$ a positive constant. We conclude that $\Psi$ belongs to
$\cF$, which is the desired contradiction.

To obtain the conclusion of the lemma from \eqref{nodrops}, we
will use that the indicator function $h_\Phi(\theta)$ is a
2-trigonometrically convex function. A consequence of this
property is that if $h_\Phi$ has a local maximum at $\theta_0$,
then
\begin{equation}
\label{maximum} h_\Phi(\theta)\geq h_\Phi(\theta_0)\cos
2(\theta-\theta_0), \ \ \ |\theta-\theta_0|\leq\frac{\pi}{4}.
\end{equation}
We refer to Chapter 1 of \cite{Le56} or Lecture 8 in \cite{Le96}
for definitions and proofs.

To finish the proof, we need the following auxiliary construction.
For each collection of directions $\theta_1, \theta_2, \ldots \ ,
\theta_n, \theta_{n+1}$ such that $0\leq \theta_1 < \theta_2 <
\ldots \ < \theta_n < 2\pi \leq \theta_{n+1}=\theta_1+2\pi$, we
set $\theta_j'= (\theta_{j+1}-\theta_j)/2$ for $j=1,2,\ldots \ ,
n$ and $\theta_{n+1}'= \theta_1'+2\pi$, and we then define the
function $H(\theta; \theta_1, \theta_2, \ldots \ , \theta_n)$ by
declaring that
\[
H(\theta; \theta_1, \theta_2, \ldots \ , \theta_n)=
\frac \pi 2 \cos 2 (\theta_{j+1}-\theta), \medskip \ \theta_j' \leq \theta < \theta_{j+1}'
\]
for $j=1,2,\ldots \ , n$. If we also require that
$\theta_{j+1}-\theta_j \leq \pi/2$, then
\beq
\label{level}
\int_0^{2\pi} H(\theta; \theta_1, \theta_2, \ldots \ , \theta_n) d\theta
\geq \int_0^{2\pi}H(\theta; 0,\pi/2, \pi, 3\pi/2)d\theta,
\eeq
as can be verified by a comparison of the level sets of the two
integrands. It follows from \eqref{nodrops} that
$\max h_\Phi(\theta) = \pi/2$ and also that there is a finite set $\{\theta_j\}_1^n$
of global maxima of $h_\Phi$ such that $|\theta_j-\theta_{j+1}|\le \pi/2$ for
$j=1,2,..,,n$; here $\theta_{n+1}=\theta_1+2\pi$ as before. Using
\eqref{maximum} and \eqref{level}, we get
\[
\int_0^{2\pi} h_\Phi(\theta) d\theta \geq
\int_0^{2\pi} H(\theta; \theta_1, \theta_2, \ldots \ , \theta_n) d\theta
\geq \int_0^{2\pi}H(\theta; 0,\pi/2, \pi, 3\pi/2)d\theta= 2\pi,
\]
where the last equality follows by a simple calculation.
\end{proof}


\begin{thebibliography}{99}

\bibitem{Bo54} R.~P.~Boas, \emph{Entire Functions}, Academic Press
Inc., New York, 1954.

\bibitem{BrM07} J.~Bruna and M.~Melnikov, \emph{On translates of
the Poisson kernel and zeros of harmonic functions},  Bull. London
Math. Soc. {\bf 22}  (2007), 317--326.



\bibitem{CDH99} O.~Christensen, B.~Deng, and C.~Heil, \emph{Density
of Gabor frames},  Appl. Comput. Harmon. Anal. {\bf 7} (1999),
292--304.

\bibitem{Erd} A. Erd\'{e}lyi, W. Magnus, F. Oberhettinger, and F.~G.~Tricomi,
\emph{Higher Transcendental Functions. Vol. III. Based, in part, on
notes left by Harry Bateman.}
McGraw-Hill Book Company Inc., New York-Toronto-London, 1955. 

\bibitem{Folland} G.~Folland, {\em  Harmonic Analysis in Phase Space,}
Annals of Mathematics Studies, 122. Princeton University Press, Princeton, NJ, 1989. 


\bibitem{Charly} K.~Gr{\"o}chenig,
{\em Foundations of Time-Frequency Analysis,}
  Birkh{\"a}user, Boston, MA, 2001. 

\bibitem{Heil} C.~Heil and A.~Powell, \emph{Gabor Schauder bases and the
Balian-Low theorem}, J. Math. Phys. \textbf{47} (2006), 113506, 21
pp.

\bibitem{Jan94} A.~J.~E.~M.~Janssen,  {\em Signal analytic proofs of
two basic results on lattice expansions,}  Appl. Comput. Harmon.
Anal. \textbf{1}  (1994), 350--354.

\bibitem{Le56} B.~Ya.~Levin, {\em Distribution of Zeros of Entire Functions,}
 Translations of Mathematical Monographs 5, Revised edition, Amer.
Math. Soc., Providence, RI, 1980.

\bibitem{Le96} B.~Ya.~Levin, {\em Lectures on Entire Functions},
Translations of Mathematical Monographs 150, Amer. Math. Soc.,
Providence, RI, 1996.

\bibitem{Lyu} Yu.~Lyubarskii,  Frames in the Bargmann space of entire functions,
{\em  Entire and Subharmonic Functions}, 167--180, Adv. Soviet
Math., 11, Amer. Math. Soc., Providence, RI, 1992.

 \bibitem{LSe99} Yu.~Lyubarskii and K.~Seip, {\em Convergence and
 summability of Gabor expansions at the Nyquist density,}
 J. Fourier Anal. Appl. \textbf{5} (1999), 127--157.

\bibitem{Ole97} A. Olevski\u\i, \emph{Completeness in $L^2(\bR)$
of almost integer translates}, C. R. Acad. Sci. Paris S\'er. I Math.
\textbf{324} (1997), 987--991.

\bibitem{RS95} J.~Ramanathan and T.~Steger,
\emph{Incompleteness of sparse coherent states}, Appl. Comput.
Harmon. Anal. \textbf{2} (1995), 148--153.

\bibitem{Rie81} M. Rieffel,  \emph{Von Neumann algebras associated
 with pairs of lattices in Lie groups},
Math. Ann. \textbf{257} (1981), 403--418.

\bibitem{Se1} K. Seip, \emph{Density theorems for sampling and interpolation
in the Bargmann-Fock space. I},  J. Reine Angew. Math.  \textbf{429}
(1992), 91--106.

\bibitem{Se2} K. Seip and R. Wallst\'{e}n, \emph{Density theorems for sampling and interpolation
in the Bargmann-Fock space. II},  J. Reine Angew. Math. \textbf{429}
(1992), 107--113.

\bibitem{Zal78} R.~A.~Zalik, \emph{On approximation by shifts and a theorem of
Wiener}, Trans. Amer. Math. Soc. \textbf{243} (1978), 299--308.
\end{thebibliography}
\end{document}